\theoremstyle{plain} 
\newtheorem{lemma}{Lemma}[section]
\newtheorem{thm}[lemma]{Theorem}
\newtheorem{cor}[lemma]{Corollary}
\theoremstyle{definition} 
\newtheorem{definition}[lemma]{Definition}
\theoremstyle{remark}
\newtheorem{remark}[lemma]{Remark}
\newcommand{\N}{\mathbb{N}}
\title{Concrete algorithms for word problem and subsemigroup problem for semigroups which are disjoint unions of finitely many copies of the free monogenic semigroup}
\author{\textbf{Nabilah Abughazalah\thanks{The author is financially supported by Princess Nourah bint Abdulrahman University in Riyadh and  Saudi Aramco Ibn Khaldun Fellowship for Saudi Women, in partnership with the Center for Clean Water and Clean Energy at MIT.
\newline  {\textit{Key words and phrases:} Semigroup, Decidability, Word problem, Membership problem.}
\newline {\textit {Mathematics Subject Classification}: 20M05.}}}\\
Department of Mathematical Sciences\\ Princess Nourah bint Abdulrahman University\\ Riyadh, Saudi Arabia\\ E-mail: nhabughazala@pnu.edu.sa }
\begin{document}

\maketitle

\begin{abstract}
Every semigroup which is a finite disjoint union of copies of the free monogenic
semigroup (natural numbers under addition) has  soluble word problem and  soluble membership problem. Efficient algorithms are given for both problems.
\bigskip
\noindent
\end{abstract}
\bigskip

\section{Introduction}
It is well known that some semigroups may be decomposed into a disjoint union of subsemigroups which is unlike the structures of classical algebra such as groups and rings. For instance, the Rees Theorem states that every completely simple semigroup is a Rees matrix semigroup over a group $G$, and is thus a disjoint union of copies of $G$, see \cite[Theorem 4.2.1]{howie95}; every Clifford semigroup is a strong semilattice of groups and as such it is a disjoint union of its maximal subgroups, see \cite[Theorem IV.2.2]{howie95}; every commutative semigroup is a semilattice of archimedean semigroups, see \cite[Theorem 3.3.1]{grillet95}.

\quad  If $S$ is a semigroup which can be decomposed into a disjoint union of subsemigroups, then it is natural to ask how the properties of $S$ depend on these  subsemigroups. For
example, if the subsemigroups are finitely generated, then so is $S$. Ara\'{u}jo et al.\cite{araujo01} consider the finite presentability of semigroups which are disjoint unions of finitely presented subsemigroups; Golubov \cite {golubov75} showed that a semigroup which is a disjoint union of residually finite subsemigroups is residually finite.

\quad In the context where $S$ is a semigroup which is a disjoint union of finitely many copies of the free monogenic semigroup, the authors in \cite {abughazalah13} proved
that $S$ is finitely presented and residually finite; in \cite {abughazalah14} the authors proved that, up to isomorphism and anti-isomorphism, there are only two types of semigroups which are unions of two copies of the free monogenic
semigroup. Similarly, they showed that there are only nine types of semigroups which are unions of three copies of the free monogenic semigroup and provided finite presentations for semigroups of each of these types. 

\quad In this paper we continue investigating finiteness conditions for a semigroup which is a disjoint union of finitely many copies of the free monogenic semigroup, the decidability of the word problem and membership problem in particular. 

\quad The paper is organized as follows. In section 2 we recall some lemmas from \cite {abughazalah13} and explain the obtained results with clarify the strong regularities which are all described in terms of arithmetic progressions. In Section 3 we prove that $S$ has a soluble word problem and  soluble membership problem.

\section{Properties of the semigroup which is a disjoint union of finitely many copies of the free monogenic semigroup}
Let $S$ be a semigroup which is a disjoint union of finitely many copies of the free monogenic semigroup:
$$S=\bigcup_{a\in A} N_a,$$
where $A$ is a finite set and $N_a=\langle a\rangle$ for $a\in A$.
We proved in \cite [Theorem 3.1]{abughazalah13} that the semigroup $S$ has the finite presentation 
 
 \begin{equation}
 \label{eq4a}
\big\langle A|\ a^k b=[\alpha(a,k,b,1)]^{\kappa(a,k,b,1)},\  (a,b\in A,\ k\in\{1,2,\dots,j\})\big\rangle,
 \end{equation}
for some $\alpha(a,k,b,1)\in A$ and $\kappa(a,k,b,1) \in\N$.

 We introduce the necessary lemmas from the paper \cite{abughazalah13} to add more information to the presentation \eqref{eq4a}.

\begin{lemma}[\cite{abughazalah13}, Lemma 2.4]
\label{lem1.2}
 If 
\[
a^px=b^r,\ a^{p+q}x=b^{r+s}
\]
for some $a,b\in A$, $x\in S$, $p,q,r\in\N$, $s\in\N_0$, then
\[
a^{p+qt}x=b^{r+st}
\]
for all $t\in\N_0$.
\end{lemma}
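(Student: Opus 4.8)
The plan is to reduce the two-term arithmetic hypothesis to a single ``one-step'' relation and then iterate that relation inside the free monogenic subsemigroup $N_b$, whose powers combine simply by adding exponents.

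First I would extract the one-step relation. Left-multiplying the first hypothesis $a^p x = b^r$ by $a^q$ gives $a^{p+q}x = a^q b^r$, while the second hypothesis says $a^{p+q}x = b^{r+s}$. Comparing the two yields
\[
a^q b^r = b^{r+s}.
\]
This is the engine of the argument: it records that multiplying $b^r$ on the left by $a^q$ advances the exponent of $b$ by exactly $s$.

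Next I would establish by induction on $t$ the auxiliary claim $a^{qt}b^r = b^{r+st}$ for all $t\in\N_0$. The cases $t=0$ and $t=1$ are immediate (the latter being the one-step relation just derived). For the inductive step, assuming $a^{qt}b^r = b^{r+st}$, I would compute
\[
a^{q(t+1)}b^r = a^q\bigl(a^{qt}b^r\bigr) = a^q b^{r+st} = a^q b^r\, b^{st} = b^{r+s}\, b^{st} = b^{r+s(t+1)},
\]
where the second equality is the induction hypothesis, the fourth equality applies the one-step relation to $b^r$, and the remaining equalities use that powers of $b$ multiply by adding exponents, since $N_b$ is free monogenic. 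Finally, left-multiplying the first hypothesis $a^p x = b^r$ by $a^{qt}$ gives $a^{p+qt}x = a^{qt}b^r$, and substituting the auxiliary claim yields $a^{p+qt}x = b^{r+st}$, as required.

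I do not expect a genuine obstacle here; the whole statement follows from a short induction. The only point needing care is the repeated appeal to the free monogenic structure of $N_b$ --- that $b^i b^j = b^{i+j}$ with no collapsing of exponents --- which is precisely what lets the one-step relation propagate cleanly. One must also keep the standing hypotheses $p,q,r\ge 1$ and $s\ge 0$ in view, so that every exponent occurring is a legitimate element of the relevant copy of the free monogenic semigroup.
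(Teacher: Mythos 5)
Your proof is correct and follows essentially the same route as the argument in the cited source (\cite{abughazalah13}, Lemma 2.4) --- note that this paper itself only quotes the lemma without reproving it: one extracts the one-step relation $a^qb^r=b^{r+s}$ and iterates it by induction inside $N_b$. The only point to tidy is the degenerate cases: since $S$ has no identity, $a^{q\cdot 0}$ and (when $s=0$) $b^{st}$ do not denote elements of $S$, so the $t=0$ case should be read off directly from the hypothesis $a^px=b^r$ rather than from the auxiliary claim, and when $s=0$ the inductive step should use the one-step relation in the form $a^qb^r=b^r$ instead of the factorization $b^{r+st}=b^rb^{st}$.
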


 \begin{lemma}
   \label{lem2}
  Let $a,c \in A, b\in S$. If $a^pb=c^{n_p}$ for two distinct values of $p$ then there exists an arithmetic progression $p+qn$, $r\in \mathbb{N}$, $s\in \mathbb{N}\cup \{0\}$ such that $a^{p+qn}b=c^{r+sn}$ for every $n\in\{0,1,2,.....\}$.
   \end{lemma}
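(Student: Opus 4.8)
The plan is to reduce the statement to a single application of Lemma~\ref{lem1.2}. Let the two given values be $p_1<p_2$, so that $a^{p_1}b=c^{m_1}$ and $a^{p_2}b=c^{m_2}$ for some $m_1,m_2\in\N$. Putting $q=p_2-p_1$ and using associativity together with $a^{p_1}b=c^{m_1}$, I get
\[
a^{p_2}b=a^{q}\bigl(a^{p_1}b\bigr)=a^{q}c^{m_1},
\]
so that $a^{q}c^{m_1}=c^{m_2}$. I would then set $p=p_1$, $r=m_1$ and $s=m_2-m_1$, so that in the notation of Lemma~\ref{lem1.2} (taking its pair $(a,b)$ to be our $(a,c)$ and its $x$ to be our $b$) the two hypotheses read $a^{p}b=c^{r}$ and $a^{p+q}b=c^{r+s}$.

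Before invoking Lemma~\ref{lem1.2} its numerical hypotheses must be checked, namely $p,q,r\in\N$ and $s\in\N_0$. The first three are immediate, since $p=p_1\ge1$, $q=p_2-p_1\ge1$ and $r=m_1\ge1$. The only point that requires an argument is $s\ge0$, i.e.\ that the exponent on $c$ cannot strictly decrease as the exponent on $a$ increases. This is the heart of the lemma and the step I expect to be the main obstacle; note that it cannot simply be arranged by relabelling, because the smaller power of $a$ must remain the base point of the progression while $q$ stays positive.

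To establish $s\ge0$ I would first upgrade the single equation $a^{q}c^{m_1}=c^{m_2}$ to a shift rule on the tail of $N_c$:
\[
a^{q}c^{j}=a^{q}c^{m_1}\,c^{\,j-m_1}=c^{m_2}\,c^{\,j-m_1}=c^{\,j+(m_2-m_1)}\qquad(j\ge m_1),
\]
so that left multiplication by $a^{q}$ acts on $\{c^{j}:j\ge m_1\}$ as translation by $m_2-m_1$. A strictly negative translation ($m_2<m_1$) would be a genuinely length-contracting action on the infinite chain $N_c$, which the finiteness of the decomposition $S=\bigcup_{a\in A}N_a$ should forbid. The cleanest way I would try to make this precise is to exhibit a positive homomorphism $\phi\colon S\to(\Q,+)$ with $\phi(a^{k})=k\lambda_a$ and $\lambda_a>0$ for each $a\in A$: the two hypotheses then give $\lambda_a(p_2-p_1)=\lambda_c(m_2-m_1)$, forcing $m_2>m_1$. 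Verifying that such a length function exists for every such $S$ (or supplying an equivalent direct structural argument ruling out the contracting action) is exactly where the work lies.

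Once $s\in\N_0$ is confirmed, Lemma~\ref{lem1.2} applies verbatim and yields $a^{p+qn}b=c^{r+sn}$ for all $n\in\N_0$, which is precisely the claimed arithmetic progression. Thus the entire proof splits into a trivial reduction and the single nontrivial monotonicity claim $s\ge0$; after that, the conclusion is a direct citation of Lemma~\ref{lem1.2}.
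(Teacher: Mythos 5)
Your reduction is exactly the paper's own: the paper proves this lemma by a single application of Lemma~\ref{lem1.2}, the only cosmetic difference being that it chooses the pair of exponents whose difference $r-q$ is minimal (minimality matters only later, for Lemma~\ref{lem3} and the notion of \emph{minimal} arithmetic progression, not for the bare existence asserted here), whereas you take the two given exponents $p_1<p_2$. Your preliminary steps are fine: the identity $a^{q}c^{m_1}=c^{m_2}$, the shift rule $a^{q}c^{j}=c^{j+(m_2-m_1)}$ for $j\ge m_1$, and the observation that a decreasing pair cannot be fixed by relabelling, since Lemma~\ref{lem1.2} forces the base point to be the smaller $a$-exponent with $q\in\N$.

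The genuine gap is that your proof stops at precisely the step you yourself identify as the heart of the matter: the claim $s\ge 0$ is never established. The positive length homomorphism $\phi\colon S\to(\Q,+)$ with $\phi(a^{k})=k\lambda_a$, $\lambda_a>0$, is only postulated, and its existence is not easier than what you are trying to prove --- each relation $a^{i}b=c^{j}$ of the presentation imposes a linear constraint on the $\lambda$'s, and the consistency of these constraints with strictly positive solutions encodes exactly the ``parallel gradients'' structure whose first manifestation is the monotonicity $s\ge 0$; so as written the argument reduces the lemma to an unproved claim of at least equal depth. Nor does the ``contracting action on an infinite chain'' intuition close by itself: your shift rule can only be iterated while the exponent stays $\ge m_1$, after which the orbit leaves the known part of $N_c$ and lands in unknown copies $N_e$, so no contradiction falls out directly. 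It is worth noting that the paper's two-line proof does not discharge this hypothesis explicitly either --- it invokes Lemma~\ref{lem1.2} without checking $n_r-n_q\in\N_0$, tacitly relying on the source \cite{abughazalah13} from which that lemma is quoted and where the non-negativity of the exponent difference is part of the established machinery. So your diagnosis of the delicate point is sharper than the paper's exposition, but a complete proof must either cite that fact or prove it; your proposal, by its own admission (``exactly where the work lies''), does neither.
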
 
   \begin{proof}
   Since, $a^pb=c^{n_p}$ for two distinct values of $p$, then we have $q, r$, such that $a^qb=c^{n_q}$, $a^{r}b=c^{n_r}$ where $q\leq r$ and $r-q$ is as small as possible. Hence, by Lemma \ref{lem1.2}, $a^{r+n(r-q)}b=c^{{n_r}+n(n_r-n_q)}$ holds for every $n\in \mathbb{N}$.
   \end{proof}
   \begin{lemma}
   \label{lem3}
   Let $a,c\in A, b\in S$. Suppose $q\in \mathbb{N} $ is the smallest possible number such that $a^pb=c^r$, $a^{p+q}b=c^{r+s}$ for some $p,r\in \mathbb{N},s\in \mathbb{N}\cup\{0\}$ holds in $S$. Then if $i\in \{p+1,p+2,.....,p+q-1\}$ and $a^ib=d^t$ we have $d\neq c$.
   \end{lemma}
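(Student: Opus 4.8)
The plan is to argue by contradiction, exploiting directly the minimality built into the choice of $q$. Suppose, contrary to the claim, that there is some $i\in\{p+1,p+2,\dots,p+q-1\}$ with $a^ib=d^t$ and $d=c$, say $a^ib=c^t$ for some $t\in\mathbb{N}$. Then there are three exponents of $a$, namely $p<i<p+q$, whose products with $b$ all lie inside $N_c=\langle c\rangle$:
\[
a^pb=c^r,\qquad a^ib=c^t,\qquad a^{p+q}b=c^{r+s}.
\]
The idea is to pair the intermediate exponent $i$ with one of the two outer exponents $p$ or $p+q$ so as to produce a pair of equations of exactly the shape occurring in the hypothesis defining $q$, but with a strictly smaller gap, thereby contradicting the minimality of $q$.

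The key step is a case split according to how the power $t$ compares with $r$. First I would treat the case $t\ge r$. Here I set $s'=t-r\ge 0$ and note that $a^pb=c^r$ together with $a^{p+(i-p)}b=c^{r+s'}$ is a pair of the required form with gap $i-p$; since $p<i<p+q$ forces $0<i-p<q$, this contradicts the choice of $q$ as the smallest such gap. In the remaining case $t<r$ I would instead pair $i$ with the upper endpoint: since $t<r\le r+s$, putting $s''=(r+s)-t>0$ gives $a^ib=c^t$ and $a^{i+((p+q)-i)}b=c^{t+s''}$, again a pair of the required form, now with gap $(p+q)-i$. As $p<i<p+q$ yields $0<(p+q)-i<q$, this once more contradicts minimality. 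Either way we reach a contradiction, so no such $i$ can satisfy $a^ib=c^t$; that is, $d\neq c$.

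I expect the only genuine subtlety to be the bookkeeping forced by the non-negativity constraint $s\in\mathbb{N}\cup\{0\}$: the two equations defining $q$ require the exponent of $c$ to be non-decreasing as the exponent of $a$ grows, so a single pairing of $i$ with (say) $p$ is legitimate only when $t\ge r$. The case split above is engineered precisely so that, in whichever case we find ourselves, the chosen pair has a genuinely non-negative increment $s'$ or $s''$ while its gap stays strictly between $0$ and $q$. Once this is arranged the contradiction with the minimality of $q$ is immediate, and no appeal to Lemma~\ref{lem1.2} or Lemma~\ref{lem2} is needed beyond the defining property of $q$ itself.
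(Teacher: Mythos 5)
Your proposal is correct and follows essentially the same route as the paper's proof: the same contradiction setup with the three equations $a^pb=c^r$, $a^ib=c^t$, $a^{p+q}b=c^{r+s}$, and the same case split on $t\ge r$ versus $t<r$, pairing $i$ with $p$ in the first case and with $p+q$ in the second. If anything your version is slightly cleaner, since you contradict the minimality of $q$ directly (with strict gaps $0<i-p<q$ and $0<(p+q)-i<q$) rather than routing the contradiction through Lemma~\ref{lem1.2} as the paper does.
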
   
     
   \begin{proof}
   Suppose to the contrary that $d=c$ then we have 
   \begin{enumerate}[$(i)$]
   \item $a^pb=c^r$;
   \item $a^ib=c^t$;
   \item $a^{p+q}b=c^{r+s}$.
   \end{enumerate}
   \textbf{Case 1.} If $t\geq r$ then from $(i),(ii)$ and Lemma \ref{lem1.2}, we obtain an arithmetic progression with a difference $i-p\leq q$, a contradiction.\\
   \textbf{Case 2.} If $t< r$ then from $(ii),(iii)$ and Lemma \ref{lem1.2}, we obtain an arithmetic progression with a difference $p+q-i\leq q$, a contradiction.
   \end{proof}  
   \begin{definition} 
\emph{   An arithmetic progression} is a sequence of the form $a^k, a^{k+q},a^{k+2q},\cdots$ where $a\in A,\ k,q \in \N$, (so that the difference of any two consecutive powers is constant), and we call it a \emph{minimal arithmetic progression} if $q$ is the smallest possible number such that $a^kb=c^r,\ a^{k+q}b=c^{r+s}$ for some $k,r\in \N,\ s\in \N\cup{0}$.
   \end{definition}
   \begin{definition}
   The \emph{ interval} on $N_a$ of \emph{length} $L$ is the set $$I=[a^t,a^{t+L}]=\{a^h\in N_a: t\leq h\leq t+L\}.$$
   \end{definition}
   \begin{lemma}
\label{lem1.3}
There exists $P\in\mathbb{N}$ such that the following holds. 
For every (not necessarily distinct) $a,b\in A$,  and every $x\in S$, if $a^r$ and $a^s$ ($r<s$) are the first two powers of $a$ such that $a^rx,a^sx\in N_b$ then $s-r\leq P$.
\end{lemma}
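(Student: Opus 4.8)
The plan is to fix $a,b\in A$ and $x\in S$ and study the whole trajectory $a^{1}x,a^{2}x,a^{3}x,\dots$ at once, reducing the statement to a uniform eventual-periodicity property. Write $x=c^{m}\in N_{c}$, and for each $n$ let $t_n\in A$ be the type of $a^{n}x$, i.e.\ $a^{n}x\in N_{t_n}$. Let $r<s$ be the two smallest indices with $t_r=t_s=b$ (there is nothing to prove unless at least two such indices exist). I would show that the type sequence $(t_n)_{n\ge 1}$ is eventually periodic with pre-period and period bounded by a constant $P_0$ depending only on $A$ and the relations \eqref{eq4a}, and not on $x$. Granting this, the first two occurrences of the value $b$ lie among the first $P_0+(\text{period})$ terms, so $s-r\le P$ for a suitable $P$.

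First I would pin down the eventual periodicity. Fix a target $d\in A$ and apply Lemma~\ref{lem2} with $x$ in the role of its $S$-element and $d$ in the role of $c$: if $a^{n}x\in N_{d}$ for at least two values of $n$, then beyond its first members the set $\{n:\,t_n=d\}$ is a single arithmetic progression whose difference $q_d$ is the minimal return difference, and by Lemma~\ref{lem3} no power strictly inside one period lands in $N_d$. Since $A$ is finite and every $a^{n}x$ has exactly one type, $\N$ is partitioned into finitely many sets, each eventually an arithmetic progression; hence $(t_n)$ is eventually periodic, with period dividing $\lcm_{d}q_d$ after a pre-period bounded by the largest progression start.

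The hard part will be making these bounds uniform in $x$, i.e.\ bounding the return differences $q_d$ and the pre-period independently of $c$ and $m$. Here I would unfold the left action step by step: if $a^{n}x\in N_{d}$ with $a^nx=d^{e_n}$, then $a^{n+1}x=a\,d^{e_n}=(ad)\,d^{e_n-1}=\alpha^{\kappa}d^{e_n-1}$, where $(\alpha,\kappa)$ is read off from \eqref{eq4a} and $\kappa$ is at most the largest exponent occurring there. Iterating, the data determining $t_{n+1}$ and the residue of the next exponent is, once the $e_n$ are large enough, a function only of the finite ``state'' $(t_n,\,e_n\bmod F)$, where $F$ is the common period of the finitely many sequences $f\mapsto\mathrm{type}(a\,d^{f})$ $(d\in A)$, each itself eventually periodic by the same argument applied to right multiplication. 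A pigeonhole over this finite state set then bounds both the period and the length of the transient by a constant determined by $|A|$, $F$ and the relations. The genuine obstacle is that a priori the exponents $e_n$ might stay small for many steps before entering the periodic range, and that number could appear to grow with $m$; the crux is to show instead that after a bounded number of left multiplications by $a$ the exponent is already in the periodic range, so the transient is bounded uniformly in $x$.

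Finally I would assemble the bound. Taking $P$ to be the resulting bound on pre-period plus one period, consider any $a,b,x$ and the first two indices $r<s$ with $t_r=t_s=b$. Since $r,s$ are consecutive members of $\{n:\,t_n=b\}$, Lemma~\ref{lem1.2} applied to $a^{r}x=b^{\mu}$ and $a^{s}x=b^{\nu}$ (with $\nu\ge\mu$) shows $s-r$ is a return difference, while Lemma~\ref{lem3} guarantees no hit strictly between them; so either both indices lie in the periodic tail, where $s-r=q_b\le P$, or the earlier one lies in the bounded transient, which again forces $s-r\le P$. In every case $s-r\le P$, as required.
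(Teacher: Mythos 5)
Your reduction of the statement to uniform eventual periodicity of the type sequence $(t_n)$ is sound as far as it goes, and the fixed-$x$ structure you extract from Lemmas~\ref{lem1.2} and \ref{lem3} matches the paper's starting point. But the proposal has a genuine gap, and it is exactly the one you yourself flag as ``the crux'': the uniformity in $x$ of the pre-period and period is the \emph{entire} content of Lemma~\ref{lem1.3}, and your sketch does not establish it. The finite-state argument you outline only operates ``once the $e_n$ are large enough,'' and the length of the transient before that regime is reached could a priori grow with $m$ when $x=c^m$ --- you concede this and leave it unresolved. Moreover, the state-machine step itself is shaky even in the periodic regime: the progressions of Lemma~\ref{lem1.2} allow $s=0$ (and the right-multiplication analogue you invoke, via the opposite semigroup, likewise has $s\in\N_0$), so an exponent $e_{n+1}$ can collapse to a small constant value no matter how large $e_n$ was; hence the pair $(t_n,\,e_n\bmod F)$ does not determine the transition, and the system can re-enter the ``small exponent'' regime repeatedly. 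So the pigeonhole over your finite state set does not, as stated, bound either the period or the transient.

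The paper closes this gap by a quite different, purely combinatorial argument that never tracks exponents at all. For fixed $x$ it partitions a cofinite part of $N_a$ into the sets $A_{c_1},\dots,A_{c_m}$ (preimages of the copies $N_{c_i}$ under right multiplication by $x$), each an arithmetic progression with differences $d_1\le\dots\le d_m$, and bounds the $d_s$ inductively: an interval $I$ of length $L=d_1d_2\cdots d_{s-1}$ must meet $A_{c_s}\cup\dots\cup A_{c_m}$, since otherwise translating $I$ by powers of $a^L$ (note each $d_i\mid L$, so translation by $L$ preserves each $A_{c_i}$, $i<s$) would trap a cofinite subset of $N_a$ inside $A_{c_1}\cup\dots\cup A_{c_{s-1}}$, contradicting the infinitude of $A_{c_s}$; then a pigeonhole on an interval of length $L(m+1)$ forces $d_s\le L(n+1)$ with $n=|A|$, so by induction the bound depends only on $n$ and not on $x$. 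If you want to salvage your dynamical approach, you would need an independent argument that the exponent enters the periodic range after a bounded number of left multiplications by $a$ uniformly in $m$ --- and the $s=0$ degeneracy suggests this cannot be done by bookkeeping on $(t_n,e_n)$ alone; some global counting input of the paper's kind appears unavoidable.
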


 \begin{proof}  
 Consider $x$ to be arbitrary but fixed. Within $N_a$ there are at most $n=|A|$ minimal arithmetic progressions by Lemmas \ref{lem1.2}, \ref{lem3}, one for each $N_b$, $b\in A$. So we have sets $A_{c_i}$ for $c_i \in \{c_1,c_2,\cdots,c_m\}\subseteq A$ where each $A_{c_i}$ is the set of elements $a^k$ such that $a^kx=c_i^r$, with the differences $d_1\leq d_2\leq \dots \leq d_s \leq \dots \leq d_m$ respectively. Thus, $N_a=H\cup A_{c_1}\cup A_{c_2}\cup \dots \cup A_{c_s}\cup \dots \cup  A_{c_m}$ where $H=\{a,a^2,\dots,a^p\}$ and $A_{c_s}=\{a^{p_s},a^{p_s+d_s},\dots\}$ such that $p_s=p+s$ for every $1\leq s\leq m$ and then $A_{c_1}\cup A_{c_2}\cup \dots \cup A_{c_s}\cup \dots \cup  A_{c_m}$ contains all but finitely many elements of $N_a$ which is $H$ by Lemma \ref{lem1.2}. Now, we prove that there exists $P \in \mathbb{N}$ not dependent on $x$, such that $d_s\leq P$ and this is sufficient since $a,b\in A$, $A$ is finite and by taking the maximum of $P$ over all $a,b$ will do for all. Let us consider an interval $I$ on $N_a$ of length $L=d_1d_2\dots d_{s-1}$ which occurs at the point $a^{p_M+1}$ where $p_M$ is the maximum power among $\{p_1,p_2,\dots,p_s,\dots,p_m\}$.\\
 
\textbf{Claim}. $I$ must contain at least one element from $A_{c_s}\cup A_{c_{s+1}}\cup \dots \cup A_{c_m}.$\\
 P{\footnotesize ROOF}.
Suppose to the contrary that all the elements in $I$ belong to $A_{c_1}\cup A_{c_2}\cup \dots \cup A_{c_{s-1}}$. Since $A_i$ is an arithmetic progression with a difference $d_i$ $(1\leq i\leq s-1)$, and since $d_i\vert L$ it follows that $A_{c_{i+L}}\subseteq A_{c_i}$. Hence, if $I\subseteq A_{c_1}\cup A_{c_2}\cup \dots \cup A_{c_{s-1}}$, it follows that $I \cdot  a^L\subseteq A_{c_1}\cup A_{c_2}\cup \dots \cup A_{c_{s-1}}$, and so $I \cdot a^{uL}\subseteq A_{c_1}\cup A_{c_2}\cup \dots \cup A_{c_{s-1}}$ for all $u\in \mathbb{N}$. Since $I$ is an interval of length $L$, it follows that $\bigcup\limits_{u\in \mathbb{N}}(I\cdot a^{uL})$ contains all but finitely many elements of $\mathbb{N}$. This contradicts the fact that $A_{c_s}$ is an infinite set disjoint from all $A_{c_1}, A_{c_2}, \dots , A_{c_{s-1}}$. Therefore the claim has been proved.\qed

\quad\quad Now we prove the lemma by induction on $s$. If $s=1$ then we choose $L=1$. Assume that the statement holds for every $k\leq s-1$. As a result of our claim, an interval $J$ of length $tL$ can be viewed as a disjoint union of $t$ intervals of length $L$. Each of the latter contains a elements from $ A_{c_s}\cup \dots \cup A_{c_m}$, and so $J$ contains at least $t$ such elements. Suppose that $d_s> L(m+1)$. So the interval $[a^r,a^{r+L(m+1)}]$ contains at least $m+1$ elements from $A_{c_s}\cup A_{c_{s+1}}\cup \dots \cup A_{c_m}$ and no elements from $A_{c_s}$. Then by using the pigeonhole principle, we conclude that two elements come from the same $A_{c_t}\ (s<t\leq m)$ with a difference less than $d_s$, a contradiction. Thus $d_s\leq L(m+1)\leq L(n+1)$. Since the number $L$ is dependent on $d_1,\dots,d_{s-1}$, none of them is dependent on $x$ by the induction hypothesis and by replacing $m$ by $n$ which is independent of $x$, we get $P=L(n+1)$ which does not depend on $x$.
\end{proof}

 The preceding result means that the differences of all minimal arithmetic progressions arising in Lemma \ref{lem1.2} are uniformly bounded.

In the next  lemma we prove that there is a uniform bound to how far arithmetic progressions can start.

\begin{lemma}
\label{lem1.4}
There exists $Q\in\mathbb{N}$ such that the following holds.
For every $a,b\in A$,  and every $x\in S$, if $a^r$ and $a^s$ ($r<s$) are the first two powers of $a$ such that $a^rx,a^sx\in N_b$ then $r\leq Q$.
\end{lemma}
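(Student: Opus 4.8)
The plan is to exploit the structural description obtained in the proof of Lemma \ref{lem1.3}. Fix $a,b\in A$ and $x\in S$, and write $x=c^{\ell}$ for the appropriate $c\in A$. Setting $y_k=a^kx$ we have $y_{k+1}=ay_k$, so $(y_k)$ is the orbit of $x$ under left multiplication by $a$; for each $k$ there are a unique $b(k)\in A$ and exponent $e(k)$ with $y_k=b(k)^{e(k)}$. By Lemmas \ref{lem1.2}--\ref{lem1.3} the set $N_a$ decomposes as a finite head $H=\{a,\dots,a^{p}\}$ together with at most $n=|A|$ arithmetic progressions of differences $d_1\le\dots\le d_m\le P$, one per target component, all of which begin only after $H$. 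The quantity we must bound is precisely $p$ (equivalently the first-occurrence index $r$), and the bound must be independent of $x$; once this is secured, taking the maximum over the finitely many pairs $a,b$ yields $Q$.

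The principal tool will be a pigeonhole argument on the head. Suppose $k_1<k_2\le p$ are two transient indices landing in the same component, say $a^{k_1}x=b^{u}$ and $a^{k_2}x=b^{v}$. If $u\le v$, then Lemma \ref{lem1.2} (with $p=k_1$, $q=k_2-k_1$, $r=u$, $s=v-u$) produces an arithmetic progression landing in $N_b$ that begins at $k_1\le p$, contradicting the fact that every progression begins strictly after the head. Hence, \emph{as long as the $N_b$-exponents do not decrease}, no component may be revisited inside the head, whence $p\le n$ and we are done. The entire difficulty is therefore concentrated in the opposite case, where a component is revisited with a strictly smaller exponent; controlling this is the step I expect to be the main obstacle.

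To analyse that case I would study the single-step map $e\mapsto ad^{e}$ for each fixed pair $(a,d)\in A\times A$. Passing to the opposite semigroup $S^{\mathrm{op}}$ --- which is again a disjoint union of finitely many copies of the free monogenic semigroup, since each $N_a$ is commutative --- Lemmas \ref{lem1.2} and \ref{lem1.3}, applied to $ad^{e}$ viewed in $S^{\mathrm{op}}$ as the power $d^{e}$ multiplied by the fixed element $a$, show that for fixed $a,d$ the assignment $e\mapsto ad^{e}$ is eventually affine in $e$ with bounded difference. As there are only finitely many pairs $(a,d)$, each with its own finite transient, there is a single constant $E^{*}$, depending only on $S$, beyond which all these maps are affine; in particular an exponent exceeding $E^{*}$ can drop by at most a bounded amount $C$ in one step. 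I would combine this with the elementary finiteness fact that for every $M$ at most $nM$ elements of $S$ have exponent $\le M$: a long run of exponent-decreasing revisits would either drive the orbit into this finite set --- forcing eventual periodicity with controlled pre-period --- or be impossible, since exponents are positive integers and cannot decrease indefinitely.

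Putting these together, I expect to bound the number of head indices at which a component is revisited with a smaller exponent by a constant depending only on $E^{*}$, $C$ and $n$, and hence to bound $p$ itself uniformly in $x$. Taking the maximum of the resulting bound over the finitely many choices of $a$ and $b$ then produces the required constant $Q$. The crux, and the place demanding the most care, is the decreasing-exponent analysis of the preceding paragraph: showing that the transient in which the exponents fail to be monotone is uniformly short.
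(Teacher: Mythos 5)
Your plan has a genuine gap, and it sits exactly where the paper's actual proof does its work. In your monotone case you argue that if no component is revisited among the ``head'' indices then $p\le n$ ``and we are done'' --- but bounding how often components repeat inside the head bounds neither the extent of the head nor the first-occurrence index $r$. The element $a^r$ (the first power landing in $N_b$) is the \emph{start} of the minimal progression for $N_b$, not a head element, and the positions $1,\dots,r-1$ may be covered arbitrarily often by the progressions of the \emph{other} components; nothing in a revisit count forces those progressions to leave room for $N_b$ early. (Indeed, if your reduction were sound, Lemma \ref{lem1.4} would follow with $Q\approx 2n$, independent of the constant $P$ of Lemma \ref{lem1.3}, which should already be a red flag.) The paper's proof supplies the missing idea: assuming the first occurrence is at $T\ge (n+1)P$, it uses $d\le P$ (Lemma \ref{lem1.3}) to sample the $n$ \emph{backward} positions $T-d,T-2d,\dots,T-nd$, all in the same residue class modulo $d$; none of these can land in $N_b$ (else $T$ would not be first), so the pigeonhole puts two of them, $a^{T-hd}x$ and $a^{T-kd}x$, in a common $N_c$ with $c\ne b$. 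Lemma \ref{lem1.2} then propagates $N_c$ forward in steps of $(h-k)d$, a multiple of $d$, so a sufficiently large term $T-hd+m(h-k)d$ lies both in this $N_c$-progression and in the $N_b$-progression $\{T+jd : j\ge 0\}$, contradicting disjointness. (The paper's displayed collision exponent $T+2(h-k)d$ is a small arithmetic slip --- it need not lie on the $N_c$-progression --- but any large enough term does, so the argument stands.) The same-residue sampling is what makes the two progressions commensurable and forces the collision; a head-counting argument cannot see this.

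Your treatment of the decreasing-exponent case is both unestablished and, within the paper's framework, unnecessary. The paper applies Lemma \ref{lem1.2} as imported from the 2013 paper, with its hypothesis $s\in\mathbb{N}_0$, and never performs your case split. Your substitute machinery is only a sketch: the uniform threshold $E^{*}$ past which $e\mapsto ad^{e}$ is ``affine,'' the uniform one-step drop bound $C$, and the termination argument combining them are all asserted rather than proved, and the appeal to Lemmas \ref{lem1.2} and \ref{lem1.3} inside $S^{\mathrm{op}}$ is circular, since those lemmas carry exactly the monotonicity hypothesis you are trying to do without. You acknowledge this yourself (``the step I expect to be the main obstacle''), so as written the proposal is a plan with its central step open, resting on a first reduction that fails; it does not recover the statement, whereas the paper's contradiction argument yields it directly with the explicit bound $Q=(n+1)P$.
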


 \begin{proof}  
 Assume the opposite, i.e. that the start of an arithmetic progression can occur arbitrarily far into $N_a$, say beyond $T\geq (n+1)P$, where $P$ is the constant in Lemma \ref{lem1.3} and $n=|A|$. That means
  \begin{eqnarray}  
  \label{eq6}
  a^Tx=b^p,\ \ a^{T+d}x=b^q
  \end{eqnarray}
  Since the difference $d\leq P$ by Lemma \ref{lem1.3}, the $n$ numbers $T-dk\ (1\leq k\leq n)$ are all positive. By using the pigeonhole principle, there are two distinct powers $a^{T-hd},a^{T-kd}$ where (without loss of generality) we assume $h>k$, such that $a^{T-hd}x,a^{T-kd}x$ belong to the same $N_c$ with a difference $(h-k)d$ where $b\neq c$ and that is clear because $a^{T-hd},a^{T-kd}$ appear before $a^{T},a^{T+d}$ where they are the first two powers such that \eqref{eq6} holds. Therefore, $a^{T+2(h-k)d}x \in N_c\ $ but this element also belongs to $N_b$ where the power $T+2(h-k)d\in \{T,T+d,\dots,T+md,\dots\} (m\in \mathbb{N}) $, a contradiction.
  \end{proof}
  
\begin{lemma}
\label{lem7}
As $x=b^s$ ranges over all of $S$, only finitely many arithmetic progressions arise in Lemma \ref{lem1.2}.
\end{lemma}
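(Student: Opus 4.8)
The plan is to read off the finiteness directly from the two uniform bounds already established. First I would fix the bookkeeping: for a fixed $x\in S$ and a fixed generator $a\in A$, Lemma~\ref{lem1.2} together with Lemmas~\ref{lem2} and~\ref{lem3} shows that, apart from a finite initial segment $H=\{a,\dots,a^p\}$, every power of $a$ whose product with $x$ lies in a given $N_c$ belongs to a single minimal arithmetic progression, and that there is at most one such progression for each target generator $c\in A$. This is precisely the decomposition $N_a=H\cup A_{c_1}\cup\cdots\cup A_{c_m}$ exhibited in the proof of Lemma~\ref{lem1.3}. Consequently each arithmetic progression arising in Lemma~\ref{lem1.2} is a subset of some $N_a$ and is completely determined by the data $(a,p,q,c)$: its source generator $a$, its starting exponent $p$, its common difference $q$, and the target generator $c$.

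Next I would invoke the uniform bounds. By Lemma~\ref{lem1.4} the starting exponent satisfies $p\le Q$, and by Lemma~\ref{lem1.3} the common difference satisfies $q\le P$, where $P$ and $Q$ are the constants furnished by those lemmas and, crucially, do not depend on $x$. Thus each of the four pieces of data ranges over a finite set: $a,c\in A$, $p\in\{1,\dots,Q\}$, and $q\in\{1,\dots,P\}$. There are therefore at most $|A|^2\,Q\,P$ possible progressions, and since every progression arising from any $x\in S$ is one of these, only finitely many arise in total.

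The one point requiring care --- and the step I would expect to be the real obstacle --- is fixing the correct notion of when two progressions are ``the same''. As $x=b^s$ ranges over $S$ the target exponent in a relation $a^{p+qt}x=c^{r+st}$ grows without bound, so a progression cannot be individuated by its absolute target exponents; instead it must be identified with its \emph{source} data in $N_a$ (together with the target generator), as above. With that convention in place the bounds $P$ and $Q$ apply verbatim and finiteness is immediate. The only remaining check is that no progression can escape the classification of Lemmas~\ref{lem1.3} and~\ref{lem1.4}, i.e.\ that every progression arising really is one of the minimal progressions counted there; this is guaranteed by the decomposition of $N_a$ recalled in the first step.
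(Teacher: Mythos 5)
Your proof is correct and takes essentially the same approach as the paper: the paper's own proof is a one-line appeal to Lemmas~\ref{lem1.3} and~\ref{lem1.4}, i.e.\ exactly the uniform bounds on starting exponents and differences that you use, with the progressions counted by their source data in $N_a$. Your explicit identification of a progression with the tuple $(a,p,q,c)$, and your remark that the target exponents cannot be used to individuate progressions, simply makes precise what the paper leaves implicit.
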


\begin{proof}  
Immediate consequence of Lemmas \ref{lem1.3}, \ref{lem1.4} in which all these arithmetic progressions start within a bounded range and their periods are bounded as well.
\end{proof}

\section{Decidability for $S$}

\subsection{Word problem}
A semigroup $S$ generated by a finite set $A$ has  soluble word problem (with respect to $A$) if there exists an algorithm which, for any two words $u,v\in  A^+$, decides whether the relation $u = v$ holds in $S$ or not. For finitely generated semigroups it is easy to see that solubility of the word problem does not depend on the choice of (finite) generating set for $S$. We write $w_1 \equiv w_2$ if the words $w_1$ and $w_2$ are identical, and $w_1=w_2$ if they represent the same element of $S$.
\\
\begin{remark}
It is well known that finitely presented residually finite semigroups have soluble word problem \cite{golubov75}, and from our results in \cite{abughazalah13}, the semigroup under consideration in this paper is finitely presented and residually finite and then has soluble word problem. However, we give a concrete algorithm which is much more efficient than the generic one in the following theorem.
\end{remark}
\begin{thm}
\label{thm 9.2.1}
Every semigroup which is a disjoint union of finitely many copies of the free monogenic semigroup has soluble word problem.
\end{thm}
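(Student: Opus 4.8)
The plan is to show that every element of $S$ has a unique \emph{normal form} $a^{m}$ with $a\in A$ and $m\in\N$, since $S=\bigcup_{a\in A}N_{a}$ is a disjoint union and each $N_{a}=\langle a\rangle\cong(\N,+)$ is free monogenic. Consequently two words $u,v\in A^{+}$ satisfy $u=v$ in $S$ if and only if their normal forms are identical, i.e. they lie in the same $N_{a}$ and carry the same exponent. The word problem therefore reduces to an effective procedure that takes a word $w\in A^{+}$ and returns the pair $(a,m)$ with $w=a^{m}$ in $S$.

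The heart of the algorithm is a subroutine computing right multiplication of a normal form by a single generator, $(a^{m},b)\mapsto a^{m}b$. If $b=a$ this is simply $a^{m+1}$, using freeness of $N_{a}$. If $b\neq a$, I use the structural results of Section 2: by Lemmas \ref{lem1.3}, \ref{lem1.4} and \ref{lem7}, as $m$ varies the exponents $m$ for which $a^{m}b$ lands in a fixed $N_{c}$ form an arithmetic progression whose period is at most $P$ and whose start is at most $Q$, and only finitely many such progressions occur. Hence the function $m\mapsto a^{m}b$ is determined by a finite amount of data: an initial segment of values, together with, for each target $N_{c}$, a progression $a^{\,p_{c}+q_{c}n}b=c^{\,r_{c}+s_{c}n}$ with $q_{c}\le P$. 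This data can be precomputed once, after which for arbitrary (possibly huge) $m>Q$ one locates the unique progression containing $m$ from the residue of $m$ modulo the least common multiple of the occurring periods (bounded in terms of $P$), and then reads off $a^{m}b=c^{\,r_{c}+s_{c}(m-p_{c})/q_{c}}$ by a single arithmetic evaluation.

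Given this subroutine, normal forms are computed by evaluating a word left to right: writing $w=b_{1}b_{2}\cdots b_{\ell}$, set the normal form of $b_{1}$ to be $b_{1}^{1}$ and, having obtained the normal form $a^{m}$ of the prefix $b_{1}\cdots b_{i}$, apply the subroutine to get the normal form of $b_{1}\cdots b_{i+1}=a^{m}b_{i+1}$. After $\ell-1$ steps this yields the normal form of $w$. To decide $u=v$ one computes both normal forms and compares them.

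The main obstacle is establishing that the data describing $m\mapsto a^{m}b$ is genuinely computable, i.e. that the bounds $P$ and $Q$ of Lemmas \ref{lem1.3} and \ref{lem1.4} can be produced effectively and that the finitely many base values $a^{m}b$ needed can be obtained directly from the defining relations \eqref{eq4a}; I would argue the latter by reducing each word $a^{m}b$ to normal form using the rewriting rules coming from the presentation, and invoking Lemmas \ref{lem1.2}, \ref{lem2} and \ref{lem3} to guarantee that this reduction stabilises into the asserted progression pattern. A secondary point is that the exponents produced can in principle grow geometrically along the evaluation; this does not affect solubility, since the exponent of the normal form of a length-$\ell$ word has only $O(\ell)$ bits and each step is a constant number of big-integer operations, so the procedure in fact runs in time polynomial in $\ell$ and is far more efficient than the generic residual-finiteness algorithm.
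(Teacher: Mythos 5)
Your proposal is correct and follows essentially the same route as the paper: reduce both words to normal forms $a^m$ by absorbing one generator at a time, using the fact (Lemmas \ref{lem1.2}--\ref{lem7}) that right multiplication by a fixed generator is eventually periodic in the exponent, with initial segment and period read off effectively from the finite presentation \eqref{eq4a} --- which is exactly how the paper resolves the effectivity obstacle you flag, via its augmented relation set $R$ and the derived relation \eqref{eq3}. Your closed-form evaluation $a^m b = c^{\,r_c+s_c(m-p_c)/q_c}$ is a minor (indeed more efficient) variant of the paper's Lemma \ref{lem 9.2.1}, which instead strips off $a^D$ blocks one application at a time.
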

\begin{proof}
 Let $S=\bigcup_{a\in A} N_a$, and $N_a=\langle a\rangle$. Thus the \textbf{Algorithm} is as follows:\\
\textbf{Input}: $u,v\in A^+$ and $u\equiv x_1^{i_1}x_2^{i_2}\cdots x_m^{i_m}$ and $v \equiv y_1^{j_1}y_2^{j_2}\cdots y_n^{j_n}$, where $x_k,y_l \in A$ for every $1\leq k \leq m$ and $1\leq l\leq n$.\\\\
\textbf{Output}: $u=v$ or $u\neq v$ in $S$.
\\

 The idea of the \textbf{Algorithm} is to reduce the word $u$ to an equivalent word $a^k$ for some $a \in A$ and $ k \in \N$.
\begin{enumerate}[\textbf{Step} 1.]
\item 
\begin{lemma}
If we have a  presentation on $S$ of the form
 \begin{equation}
 \label{eq5a}
\big\langle A|\ a^k b=[\alpha(a,k,b,1)]^{\kappa(a,k,b,1)},\  (a,b\in A,\ k\in\{1,2,\dots,j\})\big\rangle,
 \end{equation}
for some $\alpha(a,k,b,1)\in A$ and $\kappa(a,k,b,1) \in\N$,
then there is an algorithm that reduces any word of the form $a^ib$ to a word of the form $c^j$ where $a,b,c \in A$ and $j \in \N$.
\end{lemma}
\begin{proof} 
We specify the presentation \eqref{eq5a} as follows. Firstly, notice that the relations in the presentation are of the form $x^iy=z^j$ where $x,y,z\in A$ and $i,j\in \N$ and thus we have at most $n(n-1)$ minimal arithmetic progressions in which we get at most $n(n-1)$ differences. Take the the least common multiple ($LCM$) of all these differences $D$. Thus $$R_{a,b}'=\bigcup\big\{a^ib=[{\alpha(a,i,b,1)}]^{\kappa(a,i,b,1)}\ :i=1,\dots,r(a,b)\big\},$$
Where $r(a,b)\leq Q=(n+1)P$ from \ref{lem1.3}, \ref{lem1.4}.
\begin{equation}
\label{eq2}
R_{a,b}=\bigcup\limits_{k=r(a,b)+1}^{r(a,b)+D}\big\{{a^kb=[\alpha(a,k,b,1)}]^{\kappa(a,k,b,1)},\ a^{k+D}b=[{\alpha(a,k+D,b,1)}]^{\kappa(a,k+D,b,1)}\big\},
\end{equation}
and then we get the required presentation as
$$R=\bigcup\limits_{a,b\in A}(R_{a,b}'\cup R_{a,b}),$$
where $k=lD$, $l$ is any natural number. Notice that from \eqref{eq2} we have 
\begin{equation}
\label{eq3}
a^{k+D}b=a^Da^kb=a^D[\alpha(a,k,b,1)]^{\kappa(a,k,b,1)}=[\alpha(a,k+D,b,1)]^{\kappa(a,k+D,b,1)}
\end{equation}

So within $N_a$ we have $P_t$ arithmetic progressions, where $t$ is the remainder of division of $r(a,b)+q$ by $D$ for every $q\in\{1,2,\cdots,D\}$  as follows:
$$P_0=\{a^{r(a,b)+1},a^{r(a,b)+1+D},a^{r(a,b)+1+2D},\dots\},$$
$$P_1=\{a^{r(a,b)+2},a^{r(a,b)+2+D},a^{r(a,b)+2+2D},\dots\},$$
$$\vdots$$
$$P_{D-1}=\{a^{r(a,b)+D},a^{r(a,b)+2D},a^{r(a,b)+3D},\dots\}.$$
Thus for every $i \in \N$ every word of the form $a^ib$ is reduced to a word of the form $c^j$.
\end{proof}
\item 
\begin{lemma}
\label{lem 9.2.1}
For $a,b \in A$ and $s \in \N$, a finite number of applications of relations in $R$ transforms $a^sb$ to $[\alpha(a,r(a,b)+t+1+fD,b,1)]^{\kappa(a,r(a,b)+t+1+fD,b,1)}$ where $\alpha(a,r(a,b)+t+1+fD,b,1) \in A$ and $\kappa(a,r(a,b)+t+1+fD,b,1) \in \N.$
\end{lemma}
\begin{proof}
 If the relation $$a^sb=[\alpha(a,r(a,b)+t+1+fD,b,1)]^{\kappa(a,r(a,b)+t+1+fD,b,1)}$$ belongs to $R$, we are done. Now, suppose that the given relation does not appear in $R$, that means $s>k$ where $k=lD$ for some $l$ and then $s=hD+t$ where $0\leq t<D$ and thus $a^s \in P_t$. Notice that $P_t$ starts with the two elements $a^{r(a,b)+(t+1)},\ a^{r(a,b)+(t+1+D)}$ and by doing some calculations as follows:\\
First we know that $$s=hD+t,$$ and $$s-r(a,b)-t-1=hD+t-r(a,b)-t-1=fD$$ for some $f$. Thus, 
$$hD=fD+r(a,b)+1.$$
So,$$s=r(a,b)+t+1+fD,$$ which means that $a^s$ is in the $f$ position. Hence, 
\begin{align}
a^sb&=a^{fD+r(a,b)+t+1}b\notag\\&\equiv\underbrace{a^Da^D\cdots a^D}_{f\ \text{times}} a^{r(a,b)+t+1}b\notag\\&=\underbrace{a^Da^D\cdots a^D}_{f \ \text{times}} [\alpha(a,r(a,b)+t+1,b,1)]^{\kappa(a,r(a,b)+t+1,b,1)}&(\text{by}\  \eqref{eq2})\notag\\&=\underbrace{a^D\cdots a^D}_{(f-1)\ \text{times}} [\alpha(a,r(a,b)+t+1+D,b,1)]^{\kappa(a,r(a,b)+t+1+D,b,1)}& (\text{by}\  \eqref{eq3})\notag\\& \vdots \notag \\&= [\alpha(a,r(a,b)+t+1+fD,b,1)]^{\kappa(a,r(a,b)+t+1+fD,b,1)}.\nonumber
\end{align}
Therefore, we can obtain $[\alpha(a,r(a,b)+t+1+fD,b,1)]^{\kappa(a,r(a,b)+t+1+fD,b,1)}$ in finitely many steps.
\end{proof}

\item Transform $u$ to its normal form as follows:
\begin{align}
u&\equiv x_1^{i_1}x_2^{i_2}\cdots x_m^{i_m}\notag\\&\equiv (x_1^{i_1}x_2)x_2^{{i_2}-1}\cdots x_m^{i_m}\notag\\&=x_{i_{12}}^{i_{12}}x_2^{{i_2}-1}\cdots x_m^{i_m} &&\text{(by Lemma \ref{lem 9.2.1})}\notag\\&\equiv (x_{i_{12}}^{i_{12}}x_2)x_2^{{i_2}-2}\cdots x_m^{i_m}.\nonumber
\end{align}
So, by taking the first power $x_1^{i_1}$ with the next element $x_2$ and doing this $i_2$ steps, we get rid of $x_2^{i_2}$ and using the same process with all $x_3^{i_3},x_4^{i_4},\cdots,x_m^{i_m}$, we end up with $x_I^{I_M}$ after $i_2+i_3+\dots+i_m$ steps. So we have $u=x_I^{I_M}.$
\item Transform $v$ to its normal form $x_J^{J_N}$ analogously to step 3 .
\item If $I=J$ and $I_M=J_N$ then $u=v$, otherwise $u\neq v$. 
\end{enumerate}
Therefore, $S$ has soluble word problem.
\end{proof} 
\subsection{Subsemigroup membership problem}
Let $S$ be a finitely generated semigroup. We say that $S$ has a soluble subsemigroup membership problem if there is an algorithm that takes as input a finite set  $Y=\{y_1,y_2,\cdots,y_k\}\subseteq S$ and an element $x \in S$ and decides whether $x$ is in the subsemigroup $T$ generated by $Y$.

\quad\quad Now we introduce necessary well-known theorems about subsemigroups of the natural number semigroup $\mathbb{N}$. We will use these theorems to devise an algorithm to solve the subsemigroups membership problem for the semigroup under consideration.
\begin{thm}[\cite{higgins1972}, Theorem 1]
\label{thm 9.1.1}
Let $S$ be a subsemigroup of $\mathbb{N}$, then
\begin{enumerate}[$i)$]
\item There is $s\in\mathbb{N}$ such that for $n\geq s,\ n\in S$, or
\item There is $n\in \mathbb{N},\ n>1$ such that $n$ is a factor of all $s\in S$.
\end{enumerate}
\end{thm}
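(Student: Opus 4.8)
The plan is to prove the dichotomy by splitting on the greatest common divisor of the elements of $S$. Write $d=\gcd(S)$ for the greatest common divisor of all elements of the nonempty set $S\subseteq\mathbb{N}$; this is well defined since the gcd of any set of positive integers is already attained on some finite subset. If $d>1$, then by definition $d$ divides every $s\in S$, so conclusion $(ii)$ holds with $n=d$. The whole content of the theorem therefore lies in the case $d=1$, where I aim to prove conclusion $(i)$, namely that $S$ contains every sufficiently large natural number.

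Assume $d=1$ and fix any $a\in S$. I would study $S$ through its residues modulo $a$, setting $R=\{\,s+a\mathbb{Z}:s\in S\,\}\subseteq\mathbb{Z}/a\mathbb{Z}$. Because $S$ is closed under addition, $R$ is a nonempty subset of the finite group $\mathbb{Z}/a\mathbb{Z}$ that is closed under addition, and the first step is to note that such a set is automatically a subgroup: for any $x\in R$ the multiples $x,2x,3x,\dots$ all lie in $R$ and cannot all be distinct, so $(\ell-k)x=0$ for some $\ell>k$, forcing $0\in R$ and $-x\in R$. The second step is to identify $R$ with all of $\mathbb{Z}/a\mathbb{Z}$: the additive subgroup of $\mathbb{Z}$ generated by $S$ is $\gcd(S)\,\mathbb{Z}=\mathbb{Z}$, so its image modulo $a$ is the whole of $\mathbb{Z}/a\mathbb{Z}$; since $R$ is a subgroup containing the residues of the generators, we get $R=\mathbb{Z}/a\mathbb{Z}$. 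Thus every residue class modulo $a$ is represented by some element of $S$.

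To finish I would turn this surjectivity into an eventual-containment statement. For each $r\in\{0,1,\dots,a-1\}$ pick $m_r\in S$ with $m_r\equiv r\pmod a$, and put $s=\max\{m_0,\dots,m_{a-1}\}$. For any $n\geq s$, letting $r$ be the residue of $n$ modulo $a$, the difference $n-m_r$ is a nonnegative multiple of $a$, say $n=m_r+ka$ with $k\geq 0$; since $m_r\in S$, $a\in S$, and $S$ is closed under addition, it follows that $n\in S$. Hence all $n\geq s$ belong to $S$, which is exactly conclusion $(i)$.

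The step I expect to require the most care is the identification $R=\mathbb{Z}/a\mathbb{Z}$, and in particular the observation that the attained residues already form a subgroup rather than merely a subsemigroup; this is what lets us use $\gcd(S)=1$ to fill every residue class without ever subtracting elements, an operation not available inside a semigroup. An alternative would be to extract finitely many elements of $S$ of overall gcd $1$ and invoke a Frobenius-type bound such as $(a_1-1)(a_2-1)$ for two coprime generators, but the residue argument above avoids any explicit Frobenius estimate and keeps every intermediate element inside $S$.
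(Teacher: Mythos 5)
Your proof is correct, but it takes a genuinely different route from the paper's. Both arguments make the same initial split on $d=\gcd(S)$ (if $d>1$ conclusion $(ii)$ is immediate), so everything hinges on the case $d=1$. The paper handles it quantitatively: it fixes $s_1,\dots,s_m\in S$ of gcd $1$, sets $s=2s_1s_2\cdots s_m$, and for each $b>s$ takes a Bezout representation $b=\alpha_1s_1+\cdots+\alpha_ms_m$ and normalizes the coefficients by division with remainder so that all the resulting $\beta_i$ are strictly positive --- a Frobenius-type estimate with an explicit threshold. You instead fix a single $a\in S$, observe that the set $R$ of residues of $S$ modulo $a$ is a nonempty additively closed subset of the finite group $\mathbb{Z}/a\mathbb{Z}$ and hence automatically a subgroup (your pigeonhole argument for $0\in R$ and $-x\in R$ is the right substitute for subtraction, which a semigroup lacks), deduce $R=\mathbb{Z}/a\mathbb{Z}$ from $\gcd(S)=1$, and then climb each residue class from a representative $m_r$ by adding copies of $a$. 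All steps check out, including the preliminary remark that $\gcd(S)$ is attained on a finite subset. What each approach buys: yours is cleaner and avoids the coefficient bookkeeping, but your threshold $s=\max_r m_r$ is non-effective as written --- the $m_r$ are merely chosen to exist. The paper includes a proof of this imported theorem precisely because, as it says, the proof ``leads us to Corollary \ref{cor 9.3.1}'': the explicit bound $\mathcal{N}=2dn_1\cdots n_k$ extracted from its argument is what makes the triple $[d,\mathcal{N},F]$ computable from the generators, and that computability drives the membership algorithm of Theorem \ref{thm 9.3.1}. If you wanted your argument to serve the same algorithmic purpose you would need to effectivize it, e.g.\ by noting that every class in $\mathbb{Z}/a\mathbb{Z}$ is realized by a sum of at most $a$ generators (delete blocks between coinciding partial sums modulo $a$), which with $a=n_1$ gives the computable threshold $s\le n_1\max_i n_i$ --- in fact typically sharper than $2n_1\cdots n_k$, so your route, once made explicit, would also do the job.
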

We prove this theorem as the proof itself  leads us to Corollary \ref{cor 9.3.1}.  \\
P{\footnotesize ROOF}. Assume that there exist $s_1,s_2,\dots,s_m\in S$ such that the g.c.d of the collection $(s_1,s_2,\dots,s_m) $ is 1. Let $S'$ be the subsemigroup of $\mathbb{N}$ generated by $\{s_1,s_2,\dots,s_m\}$, notice that $S'\subseteq S$. Let $s=2s_1s_2\dots s_m$ and for $b>s$, since the g.c.d of $(s_1,s_2,\dots,s_m)$ is 1, we may find integers $\alpha_1,\alpha_2,\dots,\alpha_m$ such that $\alpha_1s_1+\dots+\alpha_ms_m=b$.\\ Hence there exist integers $q_i$ and $r_i$ such that $\alpha_i=q_is_1\dots s_{i-1}s_{i+1}\dots s_m+r_i$ where $0<r_i\leq s_1\dots s_{i-1}s_{i+1}\dots s_m\ (i=2,3,\dots,m)$. Now put $$\beta_1=\alpha_1+(q_2+\dots+q_m)s_2s_3\dots s_m,\ \beta_i=r_i,\ (i=2,3,\dots,m).$$ Thus $b=\beta_1s_1+\beta_2s_2+\dots + \beta_ms_m.$ Note that $\beta_i>0$ for $i=2,3\dots,m$. But since $$\beta_2s_2+\dots+\beta_m s_m=r_2s_2+\dots+r_ms_m\leq 2s_1s_2\dots s_m<b,$$clearly $\beta_1>0$.\qed 

\quad\quad Thus there are two types of subsemigroups of $\mathbb{N}$. The first type contains all natural numbers greater than some fixed natural number,
and will be called relatively prime subsemigroups of $\mathbb{N}$. The second type is a fixed integral multiple of a relatively prime subsemigroup.

\begin{cor}
\label{cor 9.1.1}
Every subsemigroup of $\mathbb{N}$ is finitely generated.
\end{cor}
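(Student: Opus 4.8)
The plan is to deduce the corollary directly from Theorem \ref{thm 9.1.1} together with the accompanying remark that every subsemigroup of $\mathbb{N}$ is either relatively prime or an integral multiple of a relatively prime one. First I would reduce to the relatively prime case by rescaling. Given a subsemigroup $S$ of $\mathbb{N}$, let $d$ be the greatest common divisor of all its elements; this is a well-defined positive integer, attained already by some finite subset of $S$ since the gcds of finite subsets form a non-increasing sequence of positive integers and therefore stabilise. Every element of $S$ is then a multiple of $d$, so setting $T = \{\, m \in \mathbb{N} : dm \in S \,\}$ gives a subsemigroup of $\mathbb{N}$ (closed under addition because $d(m_1 + m_2) = dm_1 + dm_2$), and the map $m \mapsto dm$ is a semigroup isomorphism from $T$ onto $S$. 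Since finite generation is preserved under isomorphism, it suffices to prove that $T$ is finitely generated, and by construction $\gcd(T) = 1$.

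Next I would invoke Theorem \ref{thm 9.1.1} for $T$. As $\gcd(T) = 1$, alternative $(ii)$ cannot hold, so alternative $(i)$ does: there exists $s \in \mathbb{N}$ with $n \in T$ for all $n \geq s$. The remaining step is to exhibit an explicit finite generating set for such a cofinite subsemigroup. I claim that $T$ is generated by the finite set $F = T \cap \{1, 2, \dots, 2s - 1\}$. Every element of $T$ smaller than $2s$ lies in $F$ by definition, so it remains to treat $m \geq 2s$, which I would do by strong induction on $m$: since $m - s \geq s$ we have $m - s \in T$, and $s \leq m - s < m$, so by the induction hypothesis $m - s$ is a product of elements of $F$; as $s \in F$ (because $s \leq s < 2s$ forces $s \in T \cap \{1, \dots, 2s - 1\}$), the factorisation $m = (m - s) + s$ expresses $m$ as a product of elements of $F$ as well.

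Combining the two steps, $T = \langle F \rangle$ is finitely generated, and hence so is $S = \langle \{\, df : f \in F \,\} \rangle$. I expect the only genuinely delicate point to be the rescaling step, specifically checking that $T$ is again a subsemigroup of $\mathbb{N}$ with $\gcd(T) = 1$ and that the isomorphism transports finite generating sets, while the cofinite case is a routine strong-induction argument once Theorem \ref{thm 9.1.1} is in hand.
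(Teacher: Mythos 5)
Your proof is correct and takes essentially the same route as the paper: invoke Theorem \ref{thm 9.1.1} to get cofiniteness, then show that the elements below twice the cofiniteness threshold generate (the paper writes $m=qk+f$, hence $m=(q-1)k+(k+f)$ with $k+f\in S\cap\{1,\dots,2k\}$, which is the same decomposition your strong induction produces by repeatedly subtracting $s$). The only difference is that you make the rescaling by $d=\gcd(S)$ explicit in order to reduce to the relatively prime case, a step the paper's proof leaves implicit by simply assuming $\gcd(S)=1$ at the outset, so if anything your version is slightly more complete.
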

\begin{remark}
 This corollary is well known and here is an easy proof.
 \end{remark}
 \begin{proof}
  Suppose that $S$ is a subsemigroup of  $\mathbb{N}$ and the greatest common divisor of $S$ is 1. Thus the generating set for $S$ is $S \cap \{1,2,\dots, 2k\}$ where $k\in \mathbb{N}$ such that for every $n\geq k \ :\ n\in S$. Indeed this is so because if $m>2k $ then $m=qk+f$. Thus $m=(q-1)k+k+f$ where $k+f\in S\cap \{1,2,\dots,2k\}$.
 \end{proof}

\textbf{Fact}: If $S$ is a subsemigroup of $\mathbb{N}$ then the greatest common divisor g.c.d of $S$ is the g.c.d of the generator set of $S$.\\

\begin{cor}
\label{cor 3.3*}
Every subsemigroup of $\mathbb{N}$ has the form $$F\cup D_{\mathcal{N},d},$$ where F is a finite set and $ D_{\mathcal{N},d}=\{da\ :\ a\geq\mathcal{N}\}$.
\end{cor}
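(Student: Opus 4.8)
The plan is to derive the statement directly from Theorem~\ref{thm 9.1.1} by factoring out the greatest common divisor of $S$. First I would set $d = \gcd(S)$; by the \textbf{Fact} above this coincides with the g.c.d.\ of any generating set of $S$, which exists and is finite by Corollary~\ref{cor 9.1.1}. I then pass to the rescaled set
\[
S' = \{\, s/d : s \in S \,\}.
\]
The two preliminary observations I need are that $S'$ is again a subsemigroup of $\mathbb{N}$ — closure is immediate since $(s_1+s_2)/d = s_1/d + s_2/d$ — and that $\gcd(S') = 1$, which is precisely what dividing out by $d=\gcd(S)$ achieves. Note this treatment also covers the relatively prime case $d=1$ automatically, where $S'=S$.

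Next I would apply Theorem~\ref{thm 9.1.1} to $S'$. Because $\gcd(S')=1$, alternative $(ii)$ of that theorem (a common factor $n>1$ of all elements) is impossible, so alternative $(i)$ must hold: there exists $\mathcal{N} \in \mathbb{N}$ with $a \in S'$ for every $a \geq \mathcal{N}$. Multiplying back by $d$ transfers this to $S$: for every $a \geq \mathcal{N}$ we have $da \in S$, that is,
\[
D_{\mathcal{N},d} = \{\, da : a \geq \mathcal{N} \,\} \subseteq S.
\]

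Finally I would account for the remaining elements. Every element of $S$ is a multiple $da'$ of $d$ with $a' \in S'$; if $a' \geq \mathcal{N}$ it already lies in $D_{\mathcal{N},d}$, so the complement $F = S \setminus D_{\mathcal{N},d}$ consists only of elements $da'$ with $a' < \mathcal{N}$ and is therefore finite. Since $D_{\mathcal{N},d} \subseteq S$, combining the two pieces gives $S = F \cup D_{\mathcal{N},d}$, as required. There is no genuine obstacle here: the corollary is essentially a restatement of Higgins' theorem after rescaling, and the only points demanding care are the verification that $S'$ is a subsemigroup with $\gcd(S')=1$ and the bookkeeping that the leftover set $F$ is finite.
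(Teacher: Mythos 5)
Your proof is correct and follows essentially the route the paper intends: the paper states this corollary without its own proof, treating it as immediate from Theorem~\ref{thm 9.1.1} together with the remark following that theorem that every subsemigroup of $\mathbb{N}$ is either relatively prime or a fixed integral multiple of a relatively prime subsemigroup---which is precisely the rescaling by $d=\gcd(S)$ that you carry out explicitly. One harmless quibble: your appeal to Corollary~\ref{cor 9.1.1} to justify that $\gcd(S)$ ``exists and is finite'' is unnecessary, since the greatest common divisor of any nonempty subset of $\mathbb{N}$ exists without any finite generation hypothesis.
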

\begin{definition}
Suppose that the semigroup $S$ is generated by $\{n_1,n_2,\cdots,n_k\}$. If there exist two elements $d,\mathcal{N}\in S$ and a set $F\subseteq S$ such that 
$$F=S\cap\{1,2,\cdots,{\mathcal{N}}-1\};$$
$$S\cap \{{\mathcal{N}},{\mathcal{N}}+1,\cdots\}=\{dk\ :\ k\in\mathbb{N},\ dk\geq \mathcal{N}\},$$
then we say that $S$ is defined by the \emph{triple $[d,\mathcal{N},F]$}.
\end{definition}
\begin{cor}
\label{cor 9.3.1}
Suppose that $S$ is a subsemigroup of the natural number semigroup $\mathbb{N}$. Suppose that $S$ is generated by $n_1,n_2,\cdots,n_k$. Then $S$ is defined by the triple $[d,\mathcal{N},F]$ where $d$ is the greatest common divisor of $\{n_1,n_2,\cdots,n_k\}$, $$\mathcal{N}=2dn_1n_2\cdots n_k,$$ and $$F\subseteq\{1,2,\cdots,\mathcal{N}-1\}.$$ \end{cor}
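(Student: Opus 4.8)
The plan is to reduce to the relatively prime situation already treated in Theorem \ref{thm 9.1.1} by dividing out the common divisor, and then to read off the explicit threshold from the bound $s = 2s_1 s_2 \cdots s_m$ that appears in the proof of that theorem. Concretely, I would write $n_i = d\, m_i$ for each $i$, where $d = \gcd(n_1,\dots,n_k)$, so that $\gcd(m_1,\dots,m_k) = 1$, and let $S'$ be the subsemigroup of $\mathbb{N}$ generated by $\{m_1,\dots,m_k\}$. Since every generator of $S$ is a multiple of $d$ and the multiples of $d$ are closed under addition, every element of $S$ is a multiple of $d$; indeed $S = dS' = \{d s' : s' \in S'\}$. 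This observation alone yields the easy inclusion $S \cap \{\mathcal{N}, \mathcal{N}+1, \dots\} \subseteq \{d\ell : d\ell \geq \mathcal{N}\}$, because any element of $S$ that is at least $\mathcal{N}$ is in particular a multiple of $d$ that is at least $\mathcal{N}$.

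For the reverse inclusion I would invoke Theorem \ref{thm 9.1.1} for $S'$: since $\gcd(m_1,\dots,m_k) = 1$, its proof shows that every integer strictly greater than $s = 2 m_1 m_2 \cdots m_k$ lies in $S'$. Observing that $\mathcal{N} = 2 d\, n_1 \cdots n_k = 2 d^{\,k+1} m_1 \cdots m_k$, I note that if $d\ell \geq \mathcal{N}$ then $\ell \geq 2 d^{\,k} m_1 \cdots m_k \geq 2 m_1 \cdots m_k$, using $d \geq 1$ and $k \geq 1$. Hence $\ell$ meets the Higgins threshold for $S'$, so $\ell \in S'$ and therefore $d\ell \in dS' = S$. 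This gives the equality $S \cap \{\mathcal{N}, \mathcal{N}+1, \dots\} = \{d\ell : d\ell \geq \mathcal{N}\}$. Finally, setting $F = S \cap \{1,2,\dots,\mathcal{N}-1\}$, which by construction is a subset of $\{1,\dots,\mathcal{N}-1\}$, shows that $S$ is defined by the triple $[d,\mathcal{N},F]$ in the sense of the preceding definition; that $d$ is genuinely the greatest common divisor of $S$, not merely of the generators, is the \textbf{Fact} recorded just above the statement.

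The mathematical content is essentially already contained in Theorem \ref{thm 9.1.1}, so the only point requiring care — and the main obstacle such as it is — is the bookkeeping that confirms $\mathcal{N} = 2 d\, n_1 \cdots n_k$ is genuinely large enough to serve as the threshold, i.e. that $\mathcal{N}/d$ meets or exceeds the Higgins bound $2 m_1 \cdots m_k$. Here the generous factors $2$ and $d^{\,k}$ built into $\mathcal{N}$ provide ample slack, so the boundary question (strict versus non-strict inequality in the bound from Theorem \ref{thm 9.1.1}) never causes trouble; in the tight case $d = 1$ one checks directly that $2 m_1 \cdots m_k$ is itself a positive multiple of $m_1$ and hence already lies in $S'$. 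I would confirm the degenerate cases $d = 1$ and $k = 1$ explicitly to be sure the inequality $\mathcal{N}/d \geq 2 m_1 \cdots m_k$ holds throughout.
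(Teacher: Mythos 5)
Your proposal is correct and takes essentially the same route as the paper: the paper's own proof of Corollary~\ref{cor 9.3.1} consists solely of the citation ``follows immediately from Theorem~\ref{thm 9.1.1} and Corollary~\ref{cor 9.1.1}'', and your argument --- writing $n_i = dm_i$, observing $S = dS'$ with $\gcd(m_1,\dots,m_k)=1$, and reading the explicit threshold $2m_1\cdots m_k$ off the proof of Theorem~\ref{thm 9.1.1} --- is exactly that reduction spelled out in full. Your handling of the boundary case (strict versus non-strict inequality when $d=1$ and $\ell = 2m_1\cdots m_k$, settled by noting this is a positive multiple of $m_1$) is a detail the paper leaves implicit, and it checks out.
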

\begin{proof}
 Follows immediately from Theorem \ref{thm 9.1.1} and Corollary \ref{cor 9.1.1}.
 \end{proof}
\begin{cor}
\label{cor 9.3.2}
Suppose that $S$ is a subsemigroup of the free monogenic semigroup $N$. Suppose that $S$ is generated by $a^{n_1},a^{n_2},\cdots,a^{n_k}$. Then $S$ is defined by the triple $[d,\mathcal{N},F]$ where $d$ is the greatest common divisor of $\{a^{n_1},a^{n_2},\cdots,a^{n_k}\}$, $$\mathcal{N}=a^2da^{n_1}a^{n_2}\cdots a^{n_k},$$ and $$F\subseteq\{a,a^2,\cdots,a^{\mathcal{N}-1}\}.$$ \end{cor}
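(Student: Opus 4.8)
The plan is to deduce this corollary from Corollary \ref{cor 9.3.1} by transporting the entire description across the canonical isomorphism between the free monogenic semigroup $N=\langle a\rangle$ and the additive semigroup $\mathbb{N}$. First I would fix the map $\theta\colon N\to\mathbb{N}$ given by $\theta(a^n)=n$. Since $a^i\cdot a^j=a^{i+j}$ in $N$, the map $\theta$ carries multiplication in $N$ to addition in $\mathbb{N}$, and it is visibly a bijection; hence $\theta$ is a semigroup isomorphism, and it restricts to an isomorphism of every subsemigroup of $N$ onto its image in $\mathbb{N}$. This is the only structural input needed, and it turns the statement about $N$ into the already-proved statement about $\mathbb{N}$.

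Next I would identify the image of $S$. Because $\theta$ is a homomorphism it sends a product $a^{n_{i_1}}a^{n_{i_2}}\cdots$ to the sum $n_{i_1}+n_{i_2}+\cdots$, so the subsemigroup $S=\langle a^{n_1},\dots,a^{n_k}\rangle$ of $N$ is mapped onto $\theta(S)=\langle n_1,\dots,n_k\rangle\subseteq\mathbb{N}$. Applying Corollary \ref{cor 9.3.1} to $\theta(S)$ then yields that $\theta(S)$ is defined by the triple $[d,\mathcal{N},F']$ with $d=\gcd(n_1,\dots,n_k)$, $\mathcal{N}=2dn_1n_2\cdots n_k$, and $F'=\theta(S)\cap\{1,\dots,\mathcal{N}-1\}$; this is exactly the integer threshold appearing (as an exponent) in the statement of the present corollary.

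Finally I would pull this description back through $\theta^{-1}$. As $\theta$ is an order-preserving bijection on exponents, it commutes with the set operations defining a triple: the finite part pulls back to $F:=\theta^{-1}(F')=S\cap\{a,a^2,\dots,a^{\mathcal{N}-1}\}$, while the tail $\theta(S)\cap\{\mathcal{N},\mathcal{N}+1,\dots\}=\{dk:k\in\mathbb{N},\ dk\ge\mathcal{N}\}$ pulls back to $S\cap\{a^{\mathcal{N}},a^{\mathcal{N}+1},\dots\}=\{a^{dk}:k\in\mathbb{N},\ dk\ge\mathcal{N}\}$. The key observation is that the multiples of $d$ in the $\mathbb{N}$-description correspond exactly to the powers $a^{dk}=(a^{d})^{k}$ of the element $a^{d}$, so the generator of the arithmetic tail transfers to $a^{d}$, matching the first entry of the claimed triple. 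Reading off $d$, $\mathcal{N}$, and $F$ then gives the stated description of $S$.

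I do not expect a genuine obstacle here, since the statement is a direct translation of Corollary \ref{cor 9.3.1}; the only point requiring care is the bookkeeping of verifying that $\theta$ carries each of the three data of the triple correctly, and in particular that the multiplicative notion of being a power of $a^{d}$ in $N$ matches the additive notion of being a multiple of $d$ in $\mathbb{N}$.
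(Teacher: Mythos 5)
Your proposal is correct and follows essentially the same route as the paper, whose entire proof is the one-line remark ``Directly by Corollary \ref{cor 9.3.1}'': you simply make explicit the isomorphism $\theta(a^n)=n$ that the paper leaves implicit when it restates the $\mathbb{N}$-triple in multiplicative notation. Your careful transport of the three data $[d,\mathcal{N},F]$ across $\theta$ is a welcome clarification of the paper's admittedly loose notation (e.g.\ $\mathcal{N}=a^2da^{n_1}\cdots a^{n_k}$), but it is the same argument, not a different one.
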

\begin{proof}
 Directly by Corollary \ref{cor 9.3.1}.
\end{proof}

\quad\quad After understanding how subsemigroups of $\mathbb{N}$ behave we are ready to start designing the algorithm. Since  $$S=N_1\cup N_2\cup\cdots\cup N_n,$$ and $T$ is a subsemigroup of $S$, then $$T=T_1\cup T_2\cup\dots\cup T_m,$$ where $T_i\leq N_i$ for every $i\in \{1,2,\cdots,m\},\ m\leq n$. Consequently, the generator set for $T$ is $$A_T=\bigcup\limits_{i\in\{1,2,\cdots,m\}} A_{T_i},$$ where $A_{T_i}$ is the generator set of $T_i$ for every $i\in\{1,2,\cdots,m\}$. Thus $T$ is finitely generated (\cite{araujo01}, Proposition 3.1).
 \begin{lemma}
\label{lem 9.3.1}
 Suppose that the subsemigroup $U_j=\langle N_j\cap A_T\rangle$ is defined by the triple $\big[d_j,\mathcal{N}_j ,F_j\big].$ Then there is an algorithm which takes arbitrary $U_i,U_j$ and $b\in A_T$ and tests whether $$U_ib\cap N_j\subseteq U_j$$ or not. 
\end{lemma}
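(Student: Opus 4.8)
The plan is to give an explicit, effectively computable description of $U_ib\cap N_j$ as a finite set of powers together with finitely many arithmetic progressions inside $N_j$, and then to reduce the test $U_ib\cap N_j\subseteq U_j$ to finitely many table look-ups and divisibility checks against the triple $[d_j,\mathcal{N}_j,F_j]$ defining $U_j$. Write $N_i=\langle a_i\rangle$ and $N_j=\langle a_j\rangle$. Since $U_i$ is defined by a triple $[d_i,\mathcal{N}_i,F_i]$, I know exactly which powers lie in $U_i$: the finitely many powers recorded in $F_i$ (all of exponent $<\mathcal{N}_i$), together with every $a_i^{s}$ for which $d_i\mid s$ and $s\geq\mathcal{N}_i$. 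Thus the exponent set $G_i=\{s:a_i^{s}\in U_i\}$ is a finite set together with one arithmetic progression, and is computable from the triple.

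Next I would analyse the right-translation $a_i^{s}\mapsto a_i^{s}b$ for the fixed element $b\in A_T$, viewed as an element of $S$. By Lemma~\ref{lem7} only finitely many arithmetic progressions arise; by Lemmas~\ref{lem1.3} and~\ref{lem1.4} they start within the bounded range $s\leq Q$ and have period at most $P$; and by Lemma~\ref{lem1.2} each is genuinely arithmetic. Concretely, for the target $N_j$ the set $\{s:a_i^{s}b\in N_j\}$ consists of a finite set of small exponents together with a single arithmetic progression $\{r+qt:t\geq 0\}$ of period $q\leq P$ and start $r\leq Q$, and on this progression Lemma~\ref{lem1.2} gives $a_i^{\,r+qt}b=a_j^{\,\rho+\sigma t}$ for constants $\rho,\sigma$. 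Since $a_i^{s}b$ can be evaluated for any individual $s$ by the word-problem algorithm of Lemma~\ref{lem 9.2.1}, and since the start and period are a priori bounded by $Q$ and $P$, all of this data—the finite exceptional set, the pair $(r,q)$, and the affine parameters $(\rho,\sigma)$—is found by finitely many such evaluations and is therefore effectively computable.

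I would then intersect this description with $G_i$. The finitely many exponents coming from $F_i$ are treated individually: evaluate each $a_i^{s}b$ by Lemma~\ref{lem 9.2.1} and keep those lying in $N_j$. For the progression part of $G_i$—the condition $d_i\mid s$ with $s\geq\mathcal{N}_i$—I intersect the congruence $s\equiv 0\pmod{d_i}$ with the congruence $s\equiv r\pmod{q}$ from the previous paragraph; by the Chinese Remainder Theorem this is either empty or a single arithmetic progression modulo $\lcm(d_i,q)$, computable from $d_i,r,q$. Substituting such a progression of $s$-values into the affine map $t\mapsto\rho+\sigma t$ expresses the corresponding part of $U_ib\cap N_j$ as an arithmetic progression $\{a_j^{\,h'+\rho'n}:n\geq 0\}$ in $N_j$. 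Altogether $U_ib\cap N_j$ is thereby presented, effectively, as a finite set of powers of $a_j$ together with finitely many such arithmetic progressions.

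Finally I would test containment in $U_j$ using its triple, for which membership is decidable: $a_j^{h}\in U_j$ iff either $h<\mathcal{N}_j$ and $a_j^{h}\in F_j$, or $h\geq\mathcal{N}_j$ and $d_j\mid h$. Each of the finitely many isolated powers is checked directly. For each progression $\{a_j^{\,h'+\rho'n}\}$, the finitely many terms with exponent $<\mathcal{N}_j$ are checked against $F_j$ one by one, while its infinite tail lies in $U_j$ precisely when $d_j\mid\rho'$ and $d_j\mid h'$—a single divisibility test. The algorithm answers that $U_ib\cap N_j\subseteq U_j$ holds iff every one of these finitely many checks succeeds. I expect the one genuinely substantive step to be the second paragraph: extracting, and certifying the effectivity of, the eventually-affine ``finite set plus finitely many arithmetic progressions'' description of $s\mapsto a_i^{s}b$ from the structural lemmas of Section~2. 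Granting that description, the intersection with $G_i$ and the containment test against $[d_j,\mathcal{N}_j,F_j]$ are routine modular arithmetic.
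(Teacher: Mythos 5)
Your proof is correct, but it is a genuinely fuller argument than the one the paper gives. The paper's entire proof of Lemma~\ref{lem 9.3.1} consists of your \emph{last} paragraph alone: for a single element $a_j^r\in U_ib\cap N_j$ it records the criterion $a_j^r\in U_j$ if and only if $a_j^r\in F_j$ or $d_j\mid r$ with $r\geq\mathcal{N}_j$, citing Corollary~\ref{cor 9.3.1}, and says nothing about how an algorithm certifies containment of the \emph{infinite} set $U_ib\cap N_j$ by finitely many such checks. That reduction---your first three paragraphs: reading off the exponent set of $U_i$ from its triple, the ``finite set plus one bounded arithmetic progression'' shape of $\{s: a_i^sb\in N_j\}$ with affine behaviour $a_i^{r+qt}b=a_j^{\rho+\sigma t}$ drawn from Lemmas~\ref{lem1.2}, \ref{lem1.3}, \ref{lem1.4} and \ref{lem7}, the Chinese Remainder intersection with the congruence $d_i\mid s$, and the tail test $d_j\mid h'$, $d_j\mid\rho'$ (which is right: divisibility of all large terms $h'+\rho'n$ forces both divisibilities)---is precisely the substantive content the paper leaves implicit, resting tacitly on Lemma~\ref{lem7} and on the saturation loop of Theorem~\ref{thm 9.3.1} in which the lemma is invoked. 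So the paper buys brevity at the cost of hiding the algorithmic heart of the lemma, while your version makes the decision procedure explicit. The one point to shore up is the effectivity issue you yourself flag: Lemmas~\ref{lem1.3} and \ref{lem1.4} prove that the bounds $P$ and $Q$ \emph{exist}, but their proofs do not by themselves hand you computable values for an arbitrary $x$. Since $b\in A_T$ is given as a normal-form word $a_l^m$, you can anchor this in Step~1 of Theorem~\ref{thm 9.2.1}: the progression data for multiplication by a single generator is explicitly encoded in the finite relation sets $R_{a,b}'$ and $R_{a,b}$ of \eqref{eq2}, with common period $D$, and the data for $b=a_l^m$ is then obtained by iterating Lemma~\ref{lem 9.2.1}. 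With that substitution your second paragraph becomes fully effective, and the remaining steps are, as you say, routine modular arithmetic.
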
  
\begin{proof}
 Let $a_j^r\in U_ib\cap N_j$. Then $$a_j^r\in U_j\iff a_j^r\in F_j\ \text{or}\ a_j^r=a^{d_jh_j}_j\ \text{for some}\ d_jh_j\geq d_jt_j\ \text{where} \ d_jt_j=\mathcal{N}_j,$$
by Corollary \ref{cor 9.3.1}.
\end{proof}

\begin{thm}
\label{thm 9.3.1}
Every semigroup which is a disjoint union of finitely many copies of the free monogenic semigroup has a soluble subsemigroup membership problem.
\end{thm}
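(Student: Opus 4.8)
The plan is to reduce membership in $T=\langle Y\rangle$ to finitely many membership tests inside subsemigroups of $\mathbb{N}$. Since $S=N_1\cup\cdots\cup N_n$ is a disjoint union, any subsemigroup splits as $T=T_1\cup\cdots\cup T_n$ with $T_j=T\cap N_j\leq N_j\cong\mathbb{N}$, and by Corollary \ref{cor 9.3.2} each nonempty $T_j$ is described by a triple $[d_j,\mathcal{N}_j,F_j]$. If I can compute all of these triples, then membership is immediate: writing the input as $x=a_j^r$, I simply test whether $r\in F_j$, or whether $d_j\mid r$ and $r\geq\mathcal{N}_j$. So the entire problem reduces to the effective computation of the triples defining the $T_j$.

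The difficulty is that $T_j$ is in general much larger than $U_j:=\langle N_j\cap Y\rangle$, because products of generators lying in \emph{different} components may land in $N_j$. I would therefore compute the $T_j$ by a saturation (closure) procedure. Initialise $U_j:=\langle N_j\cap Y\rangle$ for each $j$, computing each triple by Corollary \ref{cor 9.3.2} (some $U_j$ may start out empty). Then repeatedly search for indices $i,j$ and a generator $b\in Y$ with $U_i b\cap N_j\not\subseteq U_j$; whenever such a witness is found, replace $U_j$ by $\langle U_j\cup(U_i b\cap N_j)\rangle$ and recompute its triple. Halt when no witness exists. At the fixed point, set $V=\bigcup_j U_j$: since $Y\subseteq V$ and $V$ is closed under right multiplication by every $b\in Y$ (which is exactly the stopping condition), $V$ contains every product of generators, so $T\subseteq V$; and because every element ever added to a $U_j$ is a genuine element of $T$, also $V\subseteq T$. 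Hence $U_j=T_j$ for all $j$, and the computed triples are the required ones.

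Two points need justification, and this is where the real work lies. First, effectiveness of a single step: I must show that $U_i b\cap N_j$ is an effectively computable finite union of arithmetic progressions, so that the test $U_i b\cap N_j\subseteq U_j$ (an extension of Lemma \ref{lem 9.3.1}) and the recomputation of the enlarged triple are algorithmic. Here I would invoke the structure theory of Section 2: $U_i$ consists of a finite part together with a single arithmetic progression, and by Lemmas \ref{lem1.2}, \ref{lem1.3}, \ref{lem1.4} the map $a_i^m\mapsto a_i^m b$ carries such a set, componentwise, to finitely many arithmetic progressions of uniformly bounded period and starting point (cf.\ Lemma \ref{lem7}), whose actual exponents are produced by the word-problem algorithm of Theorem \ref{thm 9.2.1}. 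The enlargement $\langle U_j\cup(U_i b\cap N_j)\rangle$ is then again a subsemigroup of $\mathbb{N}$, and its triple is computed from a finite generating set via Corollary \ref{cor 9.3.1}.

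Second, termination, which I regard as the main obstacle: each $U_j$ runs through an ascending chain of subsemigroups of $\mathbb{N}$ contained in the fixed subsemigroup $T_j$, so it suffices to prove that subsemigroups of $\mathbb{N}$ satisfy the ascending chain condition. Along an ascending chain the greatest common divisors are non-increasing under divisibility (each $\gcd$ divides the preceding one), hence eventually constant, say equal to $d$; dividing through by $d$ produces an ascending chain of subsemigroups of gcd $1$, each cofinite in $\mathbb{N}$ by Theorem \ref{thm 9.1.1}, so their finite complements form a decreasing chain of finite sets and must stabilise. Thus every chain $U_j$ stabilises, and since there are only finitely many components, the saturation halts after finitely many steps. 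Combining the effectiveness of a single step, this termination argument, and the correctness of the fixed point yields the decision procedure, and hence the solubility of the subsemigroup membership problem.
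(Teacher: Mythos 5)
Your proposal is correct and takes essentially the same route as the paper's proof: initialise each $U_j=\langle N_j\cap Y\rangle$ as a triple $[d_j,\mathcal{N}_j,F_j]$ via Corollary \ref{cor 9.3.2}, saturate under right multiplication by the generators using the inclusion test of Lemma \ref{lem 9.3.1}, argue termination from the fact that each enlargement either shrinks the gaps in $F_j$ or strictly reduces $d_j$ (your ascending chain argument for subsemigroups of $\mathbb{N}$ is just a cleaner formulation of this), and finally decide $x=a_h^{r_h}\in T$ from the stabilised triples. If anything, you are more explicit than the paper on the two delicate points --- the effective description of $U_ib\cap N_j$ as finitely many arithmetic progressions via Lemmas \ref{lem1.2}--\ref{lem7}, and the formal termination argument --- while the paper adds one missing element at a time rather than the whole intersection, a difference of no substance.
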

\begin{proof}
 Let $S$ be such a semigroup. Then the \textbf{Algorithm} is as follows:\\
\textbf{Input}. A finite set $A_T \subseteq S$, specified as normal form words over the generating set, and an element $x\in S$, specified as a normal form word.
\\\\
\textbf{Output}. Whether $x\in T$ where $T$ is the subsemigroup of $S$ generated by $A_T$.
\begin{enumerate}[\textbf{Step} 1.]
 \item 
 Take $U_i=\langle A_i \rangle =\langle N_i \cap A_t\rangle$, which means that $U_i$ is a finitely generated subsemigroup of $N_i$ and then is defined by the triple $[d_i, \mathcal{N}_i,F_i]$ by Corollary \ref{cor 9.3.2}.
  Now check if $$(U_1,U_2,\dots,U_m)= T$$
 where $$(U_1,U_2,\dots,U_m)=U_1\cup U_2\cup \cdots \cup U_m,$$ which means check whether $$U_ix\subseteq \bigcup\limits _{i=1}^{m} U_i\  \text{for every}\ i\in \{1,2,\cdots,m\}\  \text{ and for every}\  x\in A_T,$$ by Lemma \ref{lem 9.3.1}. If yes then go to step 4. If there was $a_i^{r_i}x=a_j^{r_j}$ and $a_j^{r_j}\not\in U_j$ then go to step 2.
 \item Add the missing element $a_j^{r_j}$ to $U_j$ and then we have $$U_j^{(+1)}=\langle A_{U_j}\cup a_j^{r_j}\rangle,\ \text{ which is defind by the triple}\ [d_j^{(+1)}, \mathcal{N}_j^{(+1)}, F_j^{(+1)}].$$  Notice that by adding $a_j^{r_j}$ to $U_j$ we reduce the gaps in $F_j$ or we reduce the difference $d_j$ by Corollary \ref{cor 3.3*}. Thus we get the new description
\begin{equation}
\label{eq 9.1}
\big(U_1,U_2,\cdots,U_{j-1},U_j^{(+1)},U_{j+1},\cdots,U_m\big),
\end{equation}
 \item We start again with the new description \eqref{eq 9.1} and we keep adding these missing elements with all $i\in \{1,2,\cdots,m\}$.
 And then we reach to the final description
$$\big(U_1^{(+s_1)},U_2^{(+s_2)},\cdots,U_j^{(+s_j)},\cdots,U_m^{(+s_m)}\big)=T.$$
Which means that $U_j^{(+s_j)}b\subseteq \bigcup\limits_{i=1}^{m} U_i^{(+s_i)}$ for every $b\in A_T$ and for every $j\in\{1,2,\cdots,m\}$ and that because as we explained before each $U_j$ is defined by the triple $\big[d_j,\mathcal{N}_j,F_j\big]$. So if we add an element $a_j^{r_j}$ to $U_j$ that means, by Corollary \ref{cor 3.3*}, we reduce the gaps in $F_j$ and they are finite, or we reduce the difference $d_j$ and we can do this just finitely often. Thus we add finitely many elements in each $U_j$, which implies that this process terminates. So now each $U_j^{(+s_j)}$ is defined by the triple $$\big [d_j^{(+s_j)},\mathcal{N}_j^ {(+s_j)},F_j^{(+s_j)}\big].$$
\item  If we were given $x=a_h^{r_h}\in S$ and we want to see if $x\in T$ or not then we just take this element and see in $U_h^{(+s_h)}$ if $$a_h^{r_h}\in F_h^{(+s_h)},$$ or$$r_h=d_h^{(+s_h)}k\ \text{for some}\ d_h^{(+s_h)}k\geq d_h^{(+s_h)}t\ \text{where} \ d_h^{(+s_h)}t=\mathcal{N}_h^{(+s_h)},$$
  then $x\in T$ otherwise $x\not \in T$.
\end{enumerate}
\end{proof}
\section*{Acknowledgment}
I would like to thank the reviewer for their detailed comments and constructive suggestions for the manuscript. The author wants to thank Prof Nik Ru\v skuc (University of St Andrews) for his many helpful contributions to this work. And to thank Prof Pavel Etingof (MIT) for his useful comments and suggestions on this paper.


\begin{thebibliography}{99}

\bibitem{abughazalah13}
N. Abughazalah,  N. Ru\v{s}kuc, On disjoint unions of finitely many copies of the free monogenic semigroup, {\it Semigroup Forum}, {\bf 87} (2013), No. 1, Springer US, pp 243--256. 

\bibitem{abughazalah14}
N. Abughazalah, J. D Mitchell, Y. P\'{e}resse, N. Ru\v{s}kuc, A classification of disjoint unions of two or three free monogenic semigroup, {\it Semigroup Forum}, {\bf 91} (2015), Issue 1, pp 53--61.

\bibitem{araujo01}
I.M. Ara\'{u}jo, M.J.J. Branco, V.H. Fernandes, G.M.S. Gomes, and N. Ru\v{s}kuc, On generators
and relations for unions of semigroups, {\it Semigroup Forum}, {\bf 63} (2001), pp 49--62. 


\bibitem{golubov75}
E.A. Golubov. Finitely approximable regular semigroups (Russian), {\it Mat.
Zametki}, {\bf17} (1975), pp 423--432. (English translation: Math. Notes 17 (1975) 247--251.)


\bibitem{grillet95}
P.A. Grillet, {\it Semigroups}, Marcel Dekker, New York, 1995.

\bibitem{higgins1972}
J. Higgins, Subsemigroups of the additive positive integers, {\it Fibonacci Quart}, {\bf10} (1972), 225--230.

\bibitem{howie95}
J.M. Howie, {\it Fundamentals of Semigroup Theory}, Clarendon Press, Oxford, 1995.



\end{thebibliography}
\end{document}